\documentclass[12pt]{article}
\usepackage[utf8]{inputenc}
\usepackage{amsmath,amsthm,amssymb}
\usepackage{mathrsfs}
\usepackage{cite}
\usepackage{algorithm2e}

\usepackage{enumitem}
\usepackage{tikz-cd}
\usepackage{mathtools}
\usepackage{amsfonts}
\usepackage{amscd}
\usepackage{makeidx}
\usepackage{enumitem}

\usepackage{hyperref} 
 \hypersetup{
   colorlinks=true,
   linkcolor=blue,
   filecolor=blue,
   citecolor = black,
   urlcolor=cyan,}

\usepackage{float}

\usepackage{stmaryrd}

\title{An algorithm for computing root multiplicities in Kac-Moody algebras}
\author{Aidan Backus, Peter Connick, and Joshua Lin}
\date{December 2019}

\newcommand{\NN}{\mathbb{N}}
\newcommand{\ZZ}{\mathbb{Z}}

\newcommand{\RR}{\mathbb{R}}

\newcommand{\Knaive}{K_{\text{naive}}}
\newcommand{\Kascent}{K_{\text{ascent}}}

\newcommand{\g}{\mathfrak g}
\newcommand{\h}{\mathfrak h}

\def\XXint#1#2#3{{\setbox0=\hbox{$#1{#2#3}{\int}$ }
\vcenter{\hbox{$#2#3$ }}\kern-.6\wd0}}

\newcommand{\dfn}[1]{\emph{#1}\index{#1}}

\theoremstyle{definition}
\newtheorem{theorem}{Theorem}[section]
\newtheorem{prop}[theorem]{Proposition}
\newtheorem{lemma}[theorem]{Lemma}

\newtheorem{definition}[theorem]{Definition}

\begin{document}

\maketitle

\begin{abstract}
    Root multiplicities encode information about the structure of Kac-Moody algebras, and appear in applications as far-reaching as string theory and the theory of modular functions. We provide an algorithm based on the Peterson recurrence formula to compute multiplicities, and argue that it is more efficient than the naive algorithm.
\end{abstract}

\section{Introduction}
Kac-Moody algebras were introduced in the 1960s by Kac and Moody working independently as generalizations of finite-dimensional semisimple Lie algebras. Every Kac-Moody algebra $\g$ is equipped with a root system $\Delta$ and a Cartan subalgebra $\h$, and we have a root space decomposition
$$\g = \bigoplus_\alpha \g_\alpha \oplus \h \oplus \bigoplus_\alpha \g_{-\alpha}$$
where the direct sums are taken over all \emph{positive} roots $\alpha \in \Delta$. The dimension of the root space $\g_\alpha$ is called the \dfn{multiplicity} of $\alpha$. The root multiplicities of $\g$ encode important information about the structure of $\g$. Closed expressions for the root multiplicities are only known in a few cases, and a major open problem in the field to give general closed expressions, or at least estimates, for root multiplicities. A historical overview of multiplicity theory is given by  \cite[\S1]{carbone2014dimensions}.

In this paper, we discuss an algorithm that implements the Peterson recurrence formula for the root multiplicities. Naively implemented, the Peterson formula iterates over the entire root lattice, whose cardinality grows exponentially in the height of a root; we exploit the semigroup structure of the imaginary roots and divisibility properties of the real roots to greatly cut down on the space of roots one must iterate over.

We have implemented our algorithm in Sage, and at the time of writing are preparing for submission to the Sage Project. Our implementation can also be downloaded \href{https://github.com/catuse/kacmoody/blob/master/kac_moody_algebra.sage}{on GitHub}. It can compute the root multiplicities of the exceptional algebra $E_{10}$ up to height $100$ in a matter of minutes. We compared our computations to the computations for $E_{10}$ and $E_{11}$ given by Kleinschmidt \cite{kleinschmidt2004e11} and for certain hyperbolic Kac-Moody algebras given by Kac \cite[\S11.15]{kac_2014}.

\section{Preliminaries}
Fix a symmetrizable generalized Cartan matrix $A = (a_{ij})_{i,j=1}^d$. We decompose $A = DB$ where $D$ is a diagonal matrix with entries $\varepsilon_1, \dots, \varepsilon_d$ and $B = (b_{ij})_{ij}$ is symmetric. Then $A$ determines a unique Kac-Moody algebra $\g$, equipped with a Cartan subalgebra $\h \subseteq \g$ and an indexed set $\Delta_{simp} = \{\alpha_1, \dots, \alpha_d\} \subset \h^*$ of simple roots. We will always take $\Delta_{simp}$ as the basis of its span, so when we take dot products $\cdot$, they are with $\Delta_{simp}$ as an orthonormal basis.

Let $Q$ denote the root lattice of $\g$; i.e. the lattice in $\h^*$ generated by $\Delta_{simp}$.
\begin{definition}
	The \dfn{multiplicity} $m(\beta)$ of $\beta \in Q$ is the dimension of the vector space $\g_\beta$ of $g \in \g$ such that for every $h \in \h$,
	$$[h, g] = \beta(h)(g).$$
	If $m(\beta) > 0$, we say that $\Delta$ is a \dfn{root} of $\g$ and write $\beta \in \Delta$.
\end{definition}
If $\beta \in \Delta$, then either all coordinates of $\beta$ are positive (i.e. $\geq 0$) or they are all negative. The set of positive $\beta$ is called $\Delta^+$. One has $m(\beta) = m(-\beta)$, so for the purposes of computing root multiplicities, one might as well assume $\beta \in \Delta^+$ (and henceforth we do).

Let $(\cdot,\cdot)$ denote the Killing form of $\g$, so $(\beta, \gamma) = \beta \cdot B\gamma$. Recall that the \dfn{fundamental reflection} by $\alpha_i \in \Delta_{simp}$ is defined by
	$$w_i(\beta) = \beta - (\beta, \alpha_i)\alpha_i,$$
and that the fundamental reflections generate the Weyl group $W$. In particular, $\Delta^r$ is the closure of $\Delta_{simp}$ under fundamental reflections.

We let $\Delta_f$ denote the set of all positive imaginary roots $\beta \in \Delta^i$ such that for every $\alpha_j \in \Delta_{simp}$, $(\beta, \alpha_j) \leq 0$.

For any $\beta \in Q$, we let $|\beta|$ denote the height of $\beta$, i.e. the sum of the coordinates of $\beta$ with respect to $\Delta_{simp}$. We let $\rho$ denote the Weyl vector, so $2(\rho, \beta) = |\beta|$.

\begin{definition}
	A \dfn{divisor} of $\beta$ is a $\gamma \in Q^+$ such that there is a $n \in \NN$ satisfying $n\gamma = \beta$. In this case, we write $\gamma|\beta$.
\end{definition}
With this definition in mind, we define
$$c(\beta) = \sum_{\gamma_p|\beta} \frac{m(\gamma_p)}{p}$$
where we have $d\gamma_p = \beta$. This sum appears in the Peterson recurrence formula. We let $\gcd \beta$ denote the $\gcd$ of the coordinates of $\beta$ (with respect to $\Delta_{simp}$).

\begin{definition}
	Let $\beta \in \Delta_f$. If $\gamma \in Q^+$ is such that $\beta - \gamma \in Q^+$, then we say that $\gamma$ is a \dfn{subroot} of, or is \dfn{under}, $\beta$, and write $\gamma \prec \beta$.
\end{definition}
	It is immediate that $\preceq$ is a partial order, and that if $\gamma \prec \beta$, then $|\gamma| < |\beta|$.

To compute $c(\beta)$, we use Peterson's recurrence formula.
\begin{theorem}[Peterson's recurrence formula]
	One has
	$$c(\beta) = \frac{1}{( \beta, \beta - 2\rho)}\sum_{\gamma \prec \beta} ( \gamma, \beta - \gamma ) c(\gamma) c(\beta - \gamma).$$
\end{theorem}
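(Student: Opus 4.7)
The plan is to deduce the identity as a formal consequence of the Weyl--Kac denominator identity
$$e^{\rho}\prod_{\alpha\in\Delta^{+}}(1-e^{-\alpha})^{m(\alpha)} \;=\; \sum_{w\in W}(-1)^{\ell(w)}e^{w\rho},$$
which is standard in Kac--Moody theory (and which I would cite rather than reprove, obtaining it from the Weyl--Kac character formula applied to the trivial representation). Taking the formal logarithm of the left-hand side and expanding $\log(1-e^{-\alpha}) = -\sum_{n\ge 1}e^{-n\alpha}/n$, the definition of $c(\beta)$ lets me rewrite the product as $\exp(-v)$, where
$$v \;:=\; \sum_{\beta\in Q^{+}\setminus\{0\}}c(\beta)\,e^{-\beta}$$
is precisely the generating series of the $c(\beta)$. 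Letting $S$ denote the common value of the two sides of the denominator identity, one has $S = \exp(\rho - v)$, and the goal becomes to read off a recurrence on the coefficients of $v$.

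To do so, I would introduce a formal Laplacian $\Omega$ on the completed group algebra of $Q$ by $\Omega e^{\beta} = (\beta,\beta)\,e^{\beta}$, together with the bilinear gradient pairing $(\nabla f,\nabla g)$ induced coordinatewise in the simple-root basis, so that $(\nabla e^{\beta},\nabla e^{\gamma}) = (\beta,\gamma)\,e^{\beta+\gamma}$. Because the Killing form is $W$-invariant, $(w\rho,w\rho) = (\rho,\rho)$ for every $w\in W$; applying $\Omega$ to the right-hand side of the denominator identity therefore gives $\Omega S = (\rho,\rho)\,S$. The Leibniz rule $\Omega(fg) = (\Omega f)g + 2(\nabla f,\nabla g) + f(\Omega g)$ integrates formally to the chain rule $\Omega(e^{h}) = [\Omega h + (\nabla h,\nabla h)]\,e^{h}$ by a short induction on Taylor expansions.

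Substituting $h = \rho - v$ into $\Omega S/S = (\rho,\rho)$ and simplifying via $\Omega\rho = 0$ (since $\rho$ is linear in the coordinates) and $(\nabla\rho,\nabla e^{-\beta}) = -(\rho,\beta)\,e^{-\beta}$ reduces the identity to
$$\sum_{\beta\in Q^{+}\setminus\{0\}} c(\beta)(\beta,\beta - 2\rho)\,e^{-\beta} \;=\; \sum_{\beta,\gamma\in Q^{+}\setminus\{0\}} c(\beta)\,c(\gamma)\,(\beta,\gamma)\,e^{-\beta-\gamma}.$$
Extracting the coefficient of $e^{-\delta}$ for each $\delta \in Q^{+}\setminus\{0\}$ and relabeling variables yields the claimed formula; the boundary cases $\gamma = 0$ and $\gamma = \beta$ contribute zero through the factor $(\gamma,\beta-\gamma)$ and may be freely included in or excluded from the sum.

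The main obstacle I anticipate is the formal bookkeeping: verifying that $\Omega$ and $\nabla$ descend to the appropriate completion of $\ZZ[e^{-Q^{+}}]$, that the Leibniz and chain rules genuinely hold termwise there, and that coefficient extraction is legitimate. These checks are routine but must be in place before the matching of coefficients can be taken at face value. Division by $(\beta,\beta-2\rho)$ presupposes that this quantity is nonzero, which holds for every $\beta\in\Delta_{f}$ because such $\beta$ satisfy $(\beta,\beta) \le 0 < 2(\rho,\beta)$.
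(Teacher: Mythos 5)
The paper offers no proof of this theorem at all --- it simply cites Kac's book --- so there is nothing internal to compare against; what you have written is essentially the classical Kac--Peterson derivation, and it is correct. The three ingredients are all in the right place: recognizing $\sum_{\beta}c(\beta)e^{-\beta}$ as $-\log\prod_{\alpha\in\Delta^+}(1-e^{-\alpha})^{m(\alpha)}$ (which is exactly the paper's definition of $c$, with its ``$d\gamma_p=\beta$'' read as ``$p\gamma_p=\beta$''), using $W$-invariance of the form to get $\Omega S=(\rho,\rho)S$ from the alternating sum, and applying the second-order Leibniz/chain rule to $\exp(\rho-v)$ before matching coefficients of $e^{-\delta}$. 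Your remark that the boundary terms $\gamma\in\{0,\beta\}$ are killed by the factor $(\gamma,\beta-\gamma)$ correctly reconciles the convolution with the sum over $\gamma\prec\beta$. Two caveats are worth recording explicitly. First, the formal bookkeeping you defer is genuinely routine: everything lives in the height-graded completion of $\ZZ[e^{-\alpha_1},\dots,e^{-\alpha_d}]$, where each coefficient receives only finitely many contributions, so $\Omega$, $\nabla$, $\log$, $\exp$ and coefficient extraction are all legitimate termwise. Second, the division step is more delicate than the theorem's statement admits: your identity before division shows that for $\delta$ simple the right-hand side is empty while $c(\delta)=1$, which forces $(\delta,\delta-2\rho)=0$; so the recurrence can only be asserted for $|\beta|\geq 2$ with $(\beta,\beta-2\rho)\neq 0$, and your verification that this holds for all $\beta\in\Delta_f$ (where $(\beta,\beta)\leq 0<2(\rho,\beta)$) covers precisely the case in which the paper's algorithm invokes the formula. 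This also exposes a small inconsistency in the paper's normalization $2(\rho,\beta)=|\beta|$, which is not the standard Weyl vector unless all simple roots have squared length $1$; your argument is agnostic to this choice, but the statement as printed is not.
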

Peterson's recurrence formula is proven, for example, in Kac's book \cite{kac_2014}.

We will also need the following theorem of convex geometry, proven for example in Bruns-Gubeladze \cite{bruns2009polytopes}.
\begin{theorem}[Gordan]
	Let $\Gamma$ be a rational convex polyhedral cone in $\RR^d$ with dual cone
	$$\Gamma^* = \{y \in \RR^d: \forall x \in \Gamma ~y\cdot x \geq 0\}.$$
	If $G = (G, +)$ is the semigroup of lattice points in $\Gamma^*$, then $G$ is finitely generated.
\end{theorem}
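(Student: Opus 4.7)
The plan is to prove this by the standard parallelepiped argument, the key preparatory step being to show that $\Gamma^*$ is itself a rational convex polyhedral cone. Once this is established, the result follows by chopping up an arbitrary lattice point into a non-negative integer combination of generators plus a remainder lying in a bounded fundamental parallelepiped.

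First I would invoke (or briefly justify) the duality theorem for polyhedral cones: if $\Gamma$ is rational and polyhedral, then $\Gamma^*$ is also rational and polyhedral. This is the biduality / Farkas-style statement that a finitely generated rational cone and a rational cone cut out by finitely many rational half-spaces are the same thing, and that the dual operation exchanges these two descriptions. Assuming this, I can pick rational generators $v_1, \dots, v_n$ of $\Gamma^*$, and after clearing denominators I may assume each $v_i \in \ZZ^d \cap \Gamma^*$. The hard part of the whole argument is really packaged into this duality step; the rest is elementary.

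Next I would introduce the half-open parallelepiped
$$P = \Bigl\{ \sum_{i=1}^n t_i v_i : 0 \leq t_i < 1 \Bigr\}.$$
Since $P$ is bounded, the set $S := P \cap \ZZ^d$ is finite. I claim that $S \cup \{v_1, \dots, v_n\}$ generates $G$ as a semigroup under addition. Given any $y \in G$, write $y = \sum_{i=1}^n t_i v_i$ with $t_i \geq 0$, and split $t_i = \lfloor t_i \rfloor + \{t_i\}$. Then
$$y = \sum_{i=1}^n \lfloor t_i \rfloor v_i + \sum_{i=1}^n \{t_i\} v_i,$$
where the first summand is a non-negative integer combination of the $v_i$, and the second summand lies in $P$. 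Because $y$ and the first summand are lattice points, so is the second, and hence it lies in $S$. This exhibits $y$ as a finite non-negative integer combination of elements of $S \cup \{v_1, \dots, v_n\}$, proving finite generation.

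The main obstacle, as noted, is the duality statement that $\Gamma^*$ has a finite set of rational generators; I would either cite Bruns-Gubeladze for this as the paper already does, or sketch it by induction on dimension via Fourier-Motzkin elimination. Everything after that is the short finite-parallelepiped argument, which is where the semigroup structure genuinely gets used.
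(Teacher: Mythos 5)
Your proof is correct and is the standard parallelepiped argument for Gordan's lemma; the paper itself does not prove this statement but only cites Bruns--Gubeladze, where essentially this same argument (duality to get finitely many integral generators of $\Gamma^*$, then the finite set of lattice points in the fundamental parallelepiped) appears. You correctly identify that the real content is the rational Minkowski--Weyl/Farkas duality step, and citing or sketching it via Fourier--Motzkin is an acceptable way to handle it.
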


\section{The algorithm}
We can use the action of the Weyl group to compute $\Delta^r$ from $\Delta_{simp}$. More specifically, we use the \dfn{pingpong algorithm}.

\begin{algorithm}[ht]
	\KwData{a root $\alpha \in \Delta^+$, a maximum height $h$}
	$\ell := \text{Stack}(\alpha)$\;
	\While{$\ell \neq \emptyset$}{
		$\beta := \text{Pop}(\ell)$\;
		$P := \{w_1(\beta), \dots, w_d(\beta)\}$\;
		$P := \{\gamma \in P: |\gamma| \leq h \text{ and } \gamma \geq 0\}$\;
		\For{$\gamma \in P \setminus \Delta^+$}{
			$\Delta^+ := \Delta^+ \cup \gamma$\;
			Push$(\ell, \gamma)$\;
			$m(\gamma) := m(\alpha)$\;
			$c(\gamma) := c(\alpha)$\;
		}
	}
\caption{The pingpong algorithm.}
\end{algorithm}

The pingpong algorithm will add $w\alpha$ to $\Delta^+$ for every $w \in W$ such that $|w\alpha| \leq h$, along with recording the values of $m(\gamma)$ and $c(\gamma)$ for $\gamma$ in the orbit, which are preserved by the action of the Weyl group. Indeed, let $w = w_{i_1} \dots w_{i_k}$ and assume that $w^\flat = w_{i_2} \dots w_{i_k}$ is such that $w^\flat \alpha$ been added to $\Delta^+$. Then $w\alpha = w_{i_1}w^\flat \alpha$ and so $w\alpha \in P$. Therefore the claim follows by induction.

After initializing each of the $m(\alpha_j) = c(\alpha_j) = 1$, and pingponging each of the $\alpha_j \in \Delta_{simp}$, we have generated all of $\Delta^r$ up to height $h$. We now must generate the imaginary roots $\Delta^i$. Similar to the case of real roots, we simply must choose one root from each orbit, and to this end we compute the imaginary roots from the imaginary fundamental chamber, $\Delta_f$.

\begin{lemma}
    \label{hilbert basis exists}
	$\Delta_f$ is contained in a semigroup which admits a \dfn{Hilbert basis}; i.e. a minimal, finite generating set.
\end{lemma}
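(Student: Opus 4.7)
The plan is to realize $\Delta_f$ as a subset of the lattice points in a rational convex polyhedral cone, apply Gordan's theorem to get finite generation, and then extract a minimal generating set by exploiting the pointedness of the ambient semigroup.

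First I would rewrite the defining conditions of $\Delta_f$ as linear inequalities. The condition $\beta \in Q^+$ says $\beta \cdot e_i \geq 0$ for each standard basis vector $e_i$, while the chamber condition $(\beta, \alpha_j) \leq 0$ rewrites as $\beta \cdot (-B\alpha_j) \geq 0$ using $(\beta, \alpha_j) = \beta \cdot B\alpha_j$. Accordingly, I would let $\Gamma \subseteq \RR^d$ be the rational polyhedral cone generated by the finite set $\{e_1, \dots, e_d, -B\alpha_1, \dots, -B\alpha_d\}$. Its dual cone is
$$\Gamma^* = \{y \in \RR^d : y \cdot e_i \geq 0 \text{ and } y \cdot (-B\alpha_j) \geq 0 \text{ for all } i, j\},$$
and so every $\beta \in \Delta_f$ lies in $G := \Gamma^* \cap Q$. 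By Gordan's theorem, $G$ is a finitely generated subsemigroup of $(Q,+)$ containing $\Delta_f$.

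The remaining task is to upgrade this finite generating set to a \emph{minimal} one. Here I would use that $G \subseteq Q^+$, so every non-zero $x \in G$ has $|x| \geq 1$; in particular $0$ is the only unit of $G$. Call $x \in G$ \emph{irreducible} if $x \neq 0$ and $x = a+b$ with $a,b \in G$ forces $a = 0$ or $b = 0$. An easy induction on $|x|$ shows that every non-zero element of $G$ is a sum of irreducibles. On the other hand, given any finite generating set $\{g_1, \dots, g_n\}$ and an irreducible $x$, writing $x = \sum c_i g_i$ and invoking irreducibility forces $x = g_i$ for some $i$; hence the set of irreducibles is contained in every generating set and is itself finite, making it the required Hilbert basis.

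The only real obstacle is correctly setting up $\Gamma$ so that both positivity and the chamber condition $(\beta, \alpha_j) \leq 0$ become dual-cone inequalities; once this is done, Gordan's theorem together with the pointedness of $G$ does all the work.
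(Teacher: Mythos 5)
Your proposal is correct and follows essentially the same route as the paper: realize $\Delta_f$ inside the lattice points of the dual of a rational convex polyhedral cone and invoke Gordan's theorem. You are merely more explicit in two places --- you write down the cone's generators (including the positivity constraints $\beta \cdot e_i \geq 0$ that make the semigroup pointed) and you extract minimality via irreducible elements, whereas the paper cites Kac for the containment $\Delta_f \subseteq G$ and simply takes a finite generating set of minimal cardinality.
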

\begin{proof}
	Let $\Gamma \subset \h$ be the fundamental chamber of $\g$. Then $\Gamma$ is defined by the inequality $Bx \geq 0$, $\Gamma$ is polyhedral, and rational since the entries of $\g$ are integers. Now $\Delta_f$ is contained in the semigroup of lattice points $G$ of the dual cone of $\Gamma$ \cite[\S 5.8]{kac_2014}. By Gordan's theorem, $G$ is finitely generated, so we take as our Hilbert basis a generating set of minimal cardinality.
\end{proof}

The Hilbert basis $\beta_1, \dots, \beta_k$ of $\Gamma^*$ can be computed efficiently from the Cartan matrix of $\g$ by e.g. the Elliot-MacMahon algorithm \cite{pasechnik2001computing}. In our implementation we use polymake \cite{polymake2000}'s implementation of the Elliot-MacMahon algorithm. From the Hilbert basis, any $\beta \in \Delta_f$ can be written uniquely as a linear combination of the $\beta_j$.

\begin{lemma}
    \label{real lemma}
	Let $\ell = \gcd \gamma$ and assume $(\gamma, \gamma) > 0$. If $\gamma/\ell \in \Delta$, then $c(\gamma) = 1/\ell$. Otherwise, $c(\gamma) = 0$.
\end{lemma}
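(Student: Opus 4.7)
The plan is to enumerate the divisors of $\gamma$ and reduce to two standard facts about real roots in Kac--Moody theory. Since $\gcd\gamma = \ell$, the divisors $\gamma_p$ of $\gamma$ (with $p\gamma_p = \gamma$ and $\gamma_p \in Q^+$) are precisely $\gamma/p$ for positive integers $p$ dividing $\ell$. So the sum defining $c(\gamma)$ becomes
$$c(\gamma) = \sum_{p \mid \ell} \frac{m(\gamma/p)}{p},$$
and the task reduces to determining which $\gamma/p$ are roots and what their multiplicities are.

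The norm calculation $(\gamma/p, \gamma/p) = (\gamma,\gamma)/p^2 > 0$ shows that any divisor $\gamma/p$ which happens to be a root must be a real root, since positive imaginary roots satisfy $(\cdot,\cdot) \leq 0$. At this point I would invoke two standard facts from \cite{kac_2014}: every real root has multiplicity one, and real roots are primitive in the sense that no positive integer multiple $k\alpha$ with $k > 1$ of a real root $\alpha$ is again a root.

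With these facts, the claim splits into two cases. If $\gamma/\ell \in \Delta$, then $\gamma/\ell$ is real with $m(\gamma/\ell) = 1$; for any other $p \mid \ell$ with $p < \ell$, the element $\gamma/p = (\ell/p)(\gamma/\ell)$ is a positive integer multiple with coefficient $> 1$ of a real root, hence not a root, so $m(\gamma/p) = 0$. The sum collapses to $1/\ell$. If $\gamma/\ell \notin \Delta$, then no $\gamma/p$ can be a root: if it were, it would be real by the norm calculation, and dividing by $\gcd(\gamma/p) = \ell/p$ would produce $\gamma/\ell$, which would then itself have to be a real root (as the primitive form of a real root), contradicting the hypothesis. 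Hence every term vanishes and $c(\gamma) = 0$.

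The main obstacle is purely bookkeeping: confirming that the primitive form of a real root is itself a real root (which follows from primitivity by contrapositive) and that the indexing of divisors via $p \mid \ell$ is exhaustive. No substantial calculation beyond citing Kac is required.
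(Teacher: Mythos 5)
Your overall architecture matches the paper's: enumerate the divisors $\gamma/p$ for $p \mid \ell$, observe that positivity of $(\gamma,\gamma)$ forces any divisor that is a root to be real and hence of multiplicity one, and collapse the sum. Your Case 1 is fine. But Case 2 has a genuine gap. What you need there is that every real root is a \emph{primitive lattice vector}, i.e.\ $\gcd\beta = 1$ for every real root $\beta$; only this rules out the scenario in which some $\gamma/p$ with $p < \ell$ is a real root while $\gamma/\ell$ is not a root at all. You claim this ``follows from primitivity by contrapositive,'' where your ``primitivity'' is the fact that $k\alpha \notin \Delta$ for a real root $\alpha$ and an integer $k \geq 2$. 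The contrapositive of that fact, applied with $\alpha = \gamma/\ell$ and $k = \ell/p$, says only that $\gamma/\ell$ is not a real root --- which is exactly your case hypothesis, not a contradiction. The fact you cite quantifies over integer multiples of elements already known to be \emph{roots}; the fact you need concerns dividing an arbitrary real root by the $\gcd$ of its coordinates, and the former does not imply the latter.

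The missing fact is true, and establishing it is precisely what the first half of the paper's proof does: one writes an integer (Bezout) combination of the coordinates of a vector equal to its $\gcd$ and checks that having $\gcd$ equal to $1$ is preserved by each fundamental reflection $w_j$, so every element of the Weyl orbit of a simple root has $\gcd$ equal to $1$. (A quicker route to the same statement: if $\beta = w\alpha_i$ is real and $g = \gcd\beta > 1$, then $\beta/g \in Q$, hence $\alpha_i/g = w^{-1}(\beta/g) \in Q$, which is impossible since $\alpha_i$ is a basis vector of $Q$.) With that fact inserted, your Case 2 closes immediately: a real root $\gamma/p$ with $p < \ell$ would have $\gcd(\gamma/p) = \ell/p > 1$, a contradiction, and $p = \ell$ is excluded by the case hypothesis. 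Aside from this one substitution, your argument and the paper's are essentially the same.
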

\begin{proof}
	We write $\gamma = \sum_i \gamma^i \alpha_i$. We first claim that if $w \in W$ and $\ell = 1$, then $\gcd w\gamma = 1$. Indeed, one has
	$$w_j\gamma = \sum_i \gamma^i (\alpha_i - a_{ij} \alpha_j) = \sum_{i \neq j} \gamma^i \alpha_i + (\gamma^j - \sum_i a_{ij} \gamma^i) \alpha_j$$
	and
	$$\gamma \sum_{i\neq j} (d_i + d_j a_{ij}) \gamma^i + d_j \left(\gamma^j - \sum_i a_{ij} \gamma^i\right) = \sum_i d_i \gamma^i = \ell.$$
	Therefore Bezout's theorem implies that $\gcd w\gamma = 1$. From this it follows that if $\gamma \in \Delta$, then $\ell = 1$.

	Since $(\gamma, \gamma) > 0$, there is at most one $\gamma^\flat \in Q$ in the span of $\gamma$ such that $\gamma \in \Delta$. If $\gamma \in \Delta$, then the above argument shows that $c(\gamma) = m(\gamma) = 1$ and $\ell = 1$. Otherwise, since $c(\gamma) > 0$ and $(\gamma, \gamma) > 0$, there is a $\gamma^\flat|\gamma$ with $m(\gamma^\flat) = 1$. Since we then have $\gcd \gamma^\flat = 1$, it follows that $\ell \gamma^\flat = \gamma$, so the claim follows from definition of $c$.
\end{proof}

We now introduce the \dfn{graded ascent algorithm}.

\begin{algorithm}[ht]
	\KwData{a root $\beta \in \Delta_f$, a maximal height $h$}
	$R := 0$\;
	\For{$\gamma \in \Delta: \gamma \prec \beta$}{
	    \If{$(\gamma, \gamma) > 0$}{
	        $S := 0$\;
	        \For{$1 \leq n \leq |\beta|/|\gamma|$}{
	            \If{$\beta \leq n\gamma$}{break\;}
	            $S := S + ((\gamma, \beta) - n(\gamma, \gamma))c(\gamma)$\;
	        }
	        $R := R + S$\;
	    }
	    \Else{\If{$c(\beta - \gamma) > 0$}{
	        $R := R + ((\gamma, \beta) - (\gamma, \gamma))c(\gamma)$\;
	    }}
	}
	$c(\beta) := R/((\beta, \beta) - 2|\beta|)$\;
	$\Delta := \Delta \cup \beta$\;
	pingpong$(\beta, h)$\;
\caption{The graded ascent algorithm.}
\end{algorithm}
\begin{theorem}
    \label{graded ascent}
    Let $\beta \in \Delta_f$ and $h \geq 0$. Suppose that:
\begin{enumerate}
    \item For every $\alpha \in \Delta_f$ such that $\alpha \prec \beta$, we have already computed $c(\alpha)$ using the graded ascent algorithm with $h$ as input.
    \item We have run the pingpong algorithm on the simple roots and a Hilbert basis.
    \item $|\beta| \leq h$.
\end{enumerate}
    Then for every $w \in W$ such that $|w\beta| \leq h$, the graded ascent algorithm correctly computes $c(w\beta)$.
\end{theorem}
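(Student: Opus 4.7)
The plan is to split the claim into two steps: correctness of $c(\beta)$ for the chosen $\beta \in \Delta_f$ (computed in the main body of the algorithm), and propagation to the entire Weyl orbit (handled by the final \texttt{pingpong} call). For the second step I would first observe that $c$ is Weyl-invariant. Weyl-invariance of $m$ is standard, and the map $\gamma_p \mapsto w\gamma_p$ is a bijection between divisors of $\beta$ and divisors of $w\beta$, so the defining sum for $c$ is unchanged. The earlier analysis of the pingpong algorithm then guarantees that it records $c(w\beta) = c(\beta)$ and $m(w\beta) = m(\beta)$ on every orbit element of height at most $h$.

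For $c(\beta)$ itself, I would start from Peterson's recurrence
\[
c(\beta)\bigl((\beta,\beta) - 2|\beta|\bigr) = \sum_{\gamma \prec \beta}(\gamma, \beta - \gamma)\,c(\gamma)\,c(\beta - \gamma),
\]
and restrict the sum to pairs with $c(\gamma)c(\beta-\gamma) \neq 0$. Since $c(\eta) > 0$ exactly when some positive integer divisor of $\eta$ lies in $\Delta^+$, each surviving $\gamma$ is a positive integer multiple $n\gamma_0$ of a primitive root $\gamma_0$, and the proof splits on the sign of $(\gamma_0, \gamma_0)$. In the real case, $\gcd \gamma_0 = 1$ by the Bezout computation in Lemma \ref{real lemma}, so $\gamma_0$ is the unique root among its positive multiples, is visited exactly once by the outer loop, and the identity $(n\gamma_0, \beta - n\gamma_0)(1/n) = (\gamma_0, \beta) - n(\gamma_0, \gamma_0)$ together with $c(n\gamma_0) = 1/n$ from Lemma \ref{real lemma} converts the sum over multiples of $\gamma_0$ into the algorithm's inner $n$-loop; the break enforces $n\gamma_0 \prec \beta$. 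In the imaginary case, every positive integer multiple of $\gamma_0$ is itself a root, so each multiple appears as its own outer iteration and contributes a single term $(\gamma, \beta - \gamma)c(\gamma)c(\beta - \gamma)$.

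The main obstacle I anticipate is the bookkeeping needed to confirm that every value of $c$ read during the computation has already been stored. For $\gamma$ real, $\gamma_0 \in \Delta^r$ entered $\Delta$ during the pingpong runs on the simple roots in assumption (2), and $c(n\gamma_0) = 1/n$ is supplied by Lemma \ref{real lemma}. For $\gamma$ imaginary, $\gamma$ lies in the Weyl orbit of some $\alpha \in \Delta_f$ with $\alpha \preceq \beta$, so $c(\alpha)$ is available either by assumption (1) or, if $\alpha$ is a Hilbert-basis element, by assumption (2); Weyl-invariance transfers the value to $\gamma$. The factor $c(\beta - \gamma)$ is treated identically, and the degenerate case $\beta - \gamma = 0$ is killed by the vanishing of $(\gamma, \beta - \gamma)$. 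Finally, division by $(\beta,\beta) - 2|\beta|$ is legitimate because this quantity rewrites as $(\beta, \beta - 2\rho)$, which is nonzero for any positive imaginary root in $\Delta_f$.
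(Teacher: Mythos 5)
Your proposal is correct and follows essentially the same route as the paper's own proof: verify each term of the Peterson recurrence by splitting on the sign of $(\gamma,\gamma)$, using Lemma \ref{real lemma} and the inner $n$-loop to handle all multiples of a primitive real root at once, using the pingpong coverage of the $\Delta_f$-orbits below $\beta$ for the imaginary terms, and invoking Weyl-invariance of $c$ to propagate the value to $w\beta$. The extra details you supply (the explicit identity converting the sum over multiples $n\gamma_0$ into the inner loop, the divisor-bijection argument for $W$-invariance of $c$, and the nonvanishing of the denominator for imaginary $\beta$) are points the paper leaves implicit but do not change the argument.
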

\begin{proof}
    Let $\gamma \in Q$, and suppose $\gamma \prec \beta$. If we can show that either $c(\gamma)$ was already computed, or that the graded ascent algorithm will correctly compute $c(\gamma)$, then the correctness of $c(\beta)$ will follow by the Peterson recurrence formula.

    Suppose $(\gamma, \gamma) > 0$ and let $\ell = \gcd \gamma$. By Lemma \ref{real lemma}, then either $\gamma/\ell$ is a root and $c(\gamma) = 1/\ell$, or $\gamma/\ell$ is not a root and $c(\gamma) = 0$.

    First suppose $\ell = 1$. If $\gamma$ is a root, then there is a simple root $\alpha_j$ and a $w \in W$ such that $\gamma = w\alpha_j$. So the pingpong algorithm correctly placed $\gamma$ in $\Delta$, and $c(\gamma) = 1$. Moreover, every multiple of $\gamma$ which lies under $\gamma$ is of the form $n\gamma$ for $n \in \{1, \dots, |\beta|/|\gamma|\}$. Iterating over such $n$, we also compute the $c(n\gamma)$ correctly. The variable $S$, after iterating over $n$, the sum of the contributions of the $c(n\gamma)$.

    Now suppose $\ell > 1$. Then the contribution of $c(\gamma)$ will be added to $R$ with the contribution of $\gamma/\ell$. In this case, $\gamma \notin \Delta$, so the algorithm does not double-count.

    Now suppose $(\gamma, \gamma) \leq 0$. If $c(\beta - \gamma) = 0$, then $\gamma$ does not contribute to the Peterson recurrence formula and can be neglected. Otherwise, we need to show that the graded ascent algorithm already placed $\gamma$ in $\Delta$. But this follows by the assumption that this is true if $\gamma \in \Delta_f$, by the pingpong algorithm. Indeed, $\gamma = w\gamma_0$ for some $\gamma_0 \in \Delta_f$ and $w \in W$ by basic properties of Kac-Moody algebras.

    By linearity of $w \in W$, $c(w\beta) = c(\beta)$, so that the algorithm correctly computes $c(w\beta)$ as well, and adds them to $\Delta$ correctly.
\end{proof}

We now outline the structure of a program that would use the pingpong and graded ascent algorithms to compute root multiplicities. First, the program runs the Elliot-MacMahon algorithm to compute the Hilbert basis $\beta_1, \dots, \beta_k$. The program then runs the pingpong algorithm on the simple roots and the Hilbert basis with input height $h$. The program maintains an ordering on $\Delta_f$ by height, and iterates the graded ascent algorithm on $\Delta_f$. This guarantees that the assumptions of Theorem \ref{graded ascent} are met, by induction.

\section{Runtime}
We now study the runtime of the above algorithm.

Let $d$ be the dimension of the Cartan matrix, as above and assume that we want to compute all multiplicities up to height $h$. Let $\omega$ be the complexity exponent for matrix multiplication (so multiplication of two $n \times n$ matrices has runtime $O(n^\omega)$, and this estimate is best possible). It is known that $2 \leq \omega < 2.38$ \cite{williams2014multiplying}. Then the computation of a Killing form $(\cdot, \cdot)$ has runtime $O(d^\omega)$. In particular, computing the action of a fundamental reflection has runtime $O(d^\omega)$.

We assume that initializing a root object has $O(d)$ runtime, as a result of overhead due to copying lists. We also observe that by standard results about the complexity of the Euclidean algorithm, the time needed to compute $\gcd \gamma$ for any $\gamma \in Q^+$ is $O(d\log \max \gamma)$ where $\max \gamma$ is the maximum of the coordinates of $\gamma$ with respect to $\Delta_{simp}$. We assume that inserting and looking up in dictionaries has average-case runtime $O(1)$, as it does in Python.

It is not hard to see, then, that the computations of Killing forms will be the dominant term in the runtime of the algorithm, so we bound the number of Killing form computations.

\begin{prop}
    A naive application of the Peterson recurrence formula requires computation of
    $$\Knaive(h) = 4\left(\binom{h+2d-1}{2d} - \left\lceil \frac{h^d}{d!}\right\rceil\right)$$
    Killing forms.
\end{prop}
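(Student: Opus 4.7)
My plan is to count directly the number of Killing-form evaluations performed by the naive algorithm, which I interpret as follows: for every $\beta\in Q^+$ with $|\beta|\leq h$, apply Peterson's recurrence with the full inner sum over $\gamma\prec\beta$ (i.e.\ $\gamma,\beta-\gamma\in Q^+\setminus\{0\}$), computing all Killing forms from scratch with no caching across iterations.

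For each such pair $(\beta,\gamma)$, the numerator factor $(\gamma,\beta-\gamma)=(\gamma,\beta)-(\gamma,\gamma)$ and the denominator factor $(\beta,\beta-2\rho)=(\beta,\beta)-2(\beta,\rho)$ each expand into two atomic Killing-form evaluations, for a total of four per pair. Pulling this factor of $4$ outside, the problem reduces to counting the ordered pairs $(\beta,\gamma)$ subject to the constraints above.

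Substituting $\delta:=\beta-\gamma$ rewrites the count as the number of ordered pairs $(\gamma,\delta)\in(Q^+\setminus\{0\})^2$ with $|\gamma|+|\delta|\leq h$. I would evaluate this by stars-and-bars on the $2d$ coordinates of $(\gamma,\delta)$---the analogous count with $\gamma$ or $\delta$ allowed to vanish being $\binom{h+2d}{2d}$---and then apply inclusion--exclusion to strip off the degenerate configurations in which $\gamma=0$ or $\delta=0$, each of which is controlled by a $d$-dimensional stars-and-bars count comparable to $h^d/d!$.

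The final step is an algebraic reconciliation with the stated closed form. Using Pascal's identity to peel off a $\binom{h+2d-1}{2d}$ from the leading $\binom{h+2d}{2d}$, I would group the residual contributions (the $\binom{h+d}{d}$ lower-order terms plus the constant boundary pieces) and compare them against $\lceil h^d/d!\rceil$. The main obstacle will be getting the algebra to match exactly at the boundary: the two expressions are polynomials of the same leading degree in $h$, so their agreement at small heights---where the ceiling in $\lceil h^d/d!\rceil$ is genuinely sensitive to divisibility---demands a careful case analysis and careful treatment of the trivial strata $|\beta|\in\{0,1\}$ for which the inner Peterson sum is empty and contributes nothing to the Killing-form count.
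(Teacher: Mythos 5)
There is a genuine gap: your counting model does not produce the stated formula, and the discrepancy is not a boundary effect that ``algebraic reconciliation'' can absorb. Your count of ordered pairs $(\gamma,\delta)\in(Q^+\setminus\{0\})^2$ with $|\gamma|+|\delta|\leq h$ evaluates by stars-and-bars and inclusion--exclusion to
$$\binom{h+2d}{2d} - 2\binom{h+d}{d} + 1 \;=\; \binom{h+2d-1}{2d} + \binom{h+2d-1}{2d-1} - 2\binom{h+d}{d} + 1,$$
and the leftover term $\binom{h+2d-1}{2d-1}\sim h^{2d-1}/(2d-1)!$ dwarfs both $2\binom{h+d}{d}\sim 2h^d/d!$ and $\lceil h^d/d!\rceil$ for every $d\geq 2$. (The two counts coincide only when $d=1$, which is why small sanity checks in rank one are misleading here.) So the plan as written would end with an identity that is false, no matter how carefully you treat the strata $|\beta|\in\{0,1\}$ or the ceiling's divisibility sensitivity.

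The source of the mismatch is the interpretation of ``subroot of $\beta$.'' The paper does not count all splittings $\beta=\gamma+\delta$ with $\gamma,\delta$ nonzero; it assigns to each $\beta=(\beta_1,\dots,\beta_d)$ exactly $-1+\prod_j\beta_j$ subroots, i.e.\ it counts only those $\gamma$ with $1\leq\gamma_j\leq\beta_j$ in every coordinate (full support), excluding $\gamma=\beta$. It then sums $\prod_j\beta_j$ over the grid of such $\beta$ by an induction on $d$: peeling off the last coordinate gives $S_{d+1,h}=S_{d,h}+2S_{d,h-1}+\dots+hS_{d,1}$, and two applications of the hockey-stick identity yield $S_{d,h}=\binom{h+2d-1}{2d}$. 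The $\lceil h^d/d!\rceil$ term is not an inclusion--exclusion correction for degenerate pairs at all; it is the (approximate, via the volume of a $d$-simplex) count of the roots $\beta$ themselves, each contributing the $-1$. Your factor-of-$4$ accounting per pair agrees with the paper's, but to recover the stated closed form you would have to adopt the paper's restricted notion of which $\gamma$ are enumerated under each $\beta$, after which the problem becomes evaluating $\sum_\beta\prod_j\beta_j$ rather than a symmetric pair count.
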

\begin{proof}
    The naive algorithm requires that we must iterate over $h^d$ many elements of the positive root lattice $Q^+$, and for each $\beta \in Q$, $\beta = (\beta_1, \dots, \beta_n)$ written in the basis of simple roots, we must iterate over $-1 + \prod_j \beta_j$ many subroots of $\beta$, computing $4$ Killing forms for each subroot. We arrange the set
    $$Q_{d,h} = \{\beta \in Q^+: |\beta| \leq h\}$$
    into a $d$-dimensional grid (here shown in dimension $d = 2$, where the left grid shows the coordinates $(\beta_j)_j$ and the right grid shows the product of the coordinates $\prod_j \beta_j$)
\begin{align*}
\begin{matrix}
    (1, h) &\\
    (1, h - 1) & (2, h - 1)\\
    (1, h - 2) & (2, h - 2) & \ddots \\
    \vdots &\vdots &(\beta_1, \beta_2) &\ddots \\
    (1, 1) &(2, 1) & \cdots && (h, 1)
\end{matrix}
&&
\begin{matrix}
    h &\\
    h - 1 & 2(h - 1)\\
    h - 2 & 2(h-2) & \ddots \\
    \vdots &\vdots &\beta_1\beta_2&\ddots \\
    1 &2 & \cdots && h.
\end{matrix}
\end{align*}
    We then compute the sum $S_{d,h}$ of the entries in the right grid. A direct computation shows that
    $$S_{1,h} = 1 + \dots + h = \binom{h+1}{2}.$$
    Suppose inductively that $S_{d,h} = \binom{h+2d-1}{2d}$. We sum up the rows of the right grid corresponding to $Q_{d+1,h}$, each of which is a multiple of a right grid corresponding to $Q_{d+1,h-j}$, to see that
\begin{align*}
    S_{d+1,h} &= S_{d,h} + 2S_{d,h-1} + \dots + hS_{d,1}\\
    &= \binom{h+2d-1}{2d} + 2\binom{h+2d-2}{2d} + \dots + h\binom{2d}{2d}\\
    &= \left(\binom{h+2d-1}{2d} + \binom{h+2d-2}{2d} + \dots + \binom{2d}{2d}\right)\\
    &~~+ \left(\binom{h+2d-2}{2d} + \dots + \binom{2d}{2d}\right) + \dots + \binom{2d}{2d}\\
    &= \binom{h+2d}{2d+1} + \binom{h+2d-1}{2d+1} + \dots + \binom{2d+1}{2d+1}\\
    &= \binom{h+2d+1}{2d+2} = \binom{h+2(d+1) - 1}{2(d+1)}.
\end{align*}
    Therefore, since $|Q_{d,h}|$ is contained in a $d$-simplex of side length $h$ and hence volume $h^d/d!$,
    $$\sum_{\beta \in Q_{d,h}} -1 + \prod_{j=1}^d \beta_j = -h^d + S_{d,h} = \left\lceil \frac{h^d}{d!}\right\rceil + \binom{h+2d-1}{2d}.$$
    This completes the proof.
\end{proof}

Let $\mathcal O_\beta$ denote the set of those elements $\gamma$ of the Weyl orbit of $\beta \in Q^+$ such that $|\gamma| \leq h$. The pingpong algorithm iterates over all of $\mathcal O_\beta$. For each $\gamma$ in the orbit, each fundamental reflection $w_j(\beta) = (\beta, \alpha_j)$, of which there are $d$, must be computed. So computing $\mathcal O_\beta$ using the pingpong algorithm requires $d|\mathcal O_\beta|$ computations of Killing forms.

Let
$$P_h = \frac{|\{\beta \in \Delta^+: |\beta| = h\}|}{|\{\beta \in Q^+: |\beta| = h\}|}$$
denote the probability that a randomly selected element of the root lattice of height $h$ is actually a root. Note that $P_\infty = \lim_{h \to \infty} P_h$ can actually be computed from the Cartan matrix in many cases, and can be used to approximate $P_h$ well in such cases. In trivial cases, such as $E_9$ and finite-dimensional Lie algebras, $P_\infty = 0$. In general, $P_\infty < 1$, since the $\ZZ$-span of each real root $\alpha$ can only meet $\Delta^+$ at one point, namely $\alpha$ itself, yet if $\mathfrak g$ is infinite-dimensional then there are infinitely many real roots. We similarly define
$$Q_h = \max_{j \leq h} \frac{|\{\beta \in \Delta_f: |\beta| = j\}|}{|\{\beta \in Q^+: |\beta| = j\}|}$$
and $Q_\infty = \lim_{h \to \infty} Q_h$, so that $Q_h \leq P_h$.

Suppose, for simplicity, that all simple roots $\alpha_j$ have the same Killing length, i.e. $(\alpha_j, \alpha_j) = C$; this follows, for example, if the Cartan matrix $A$ is symmetric. We note that $P_h$ is increasing, since $P_hh^d$ counts the number of $\gamma \in Q$ such that $(\gamma, \gamma) \leq C$ and $|\gamma| = h$, and the curve $(\gamma, \gamma) = C$ is a hyperboloid. By definition, $Q_h$ is also increasing. So the limits $P_\infty$, $Q_\infty$ exist by the monotone convergence theorem.

\begin{prop}
    \label{kascent}
    Suppose that all simple roots have the same Killing length. Let $\Kascent(h)$ denote the number of Killing forms needed to compute root multiplicities using the graded ascent algorithm up to height $h$. Then
    $$\Kascent(h) \leq P_hQ_h\Knaive(h) + dP_hh^d.$$
    In particular, for any $h$,
    $$\Kascent(h) \leq P_\infty Q_\infty\Knaive(h) + dP_\infty h^d.$$
\end{prop}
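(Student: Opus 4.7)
The plan is to partition the Killing forms computed by the graded ascent algorithm into two disjoint groups---those executed inside pingpong subroutine calls, and those executed in the main body of the graded ascent algorithm---and to bound the two groups by $dP_h h^d$ and $P_h Q_h \Knaive(h)$ respectively.

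For the pingpong contribution, I would note that pingpong is called on each simple root, each element of the Hilbert basis, and each $\beta \in \Delta_f$ immediately after its multiplicity is computed. By the analysis preceding the proposition, a call to pingpong on $\beta$ spends exactly $d|\mathcal{O}_\beta|$ Killing forms. Since the Weyl orbits partition $\Delta^+$, summing over every pingpong call gives a total of exactly $d \cdot |\{\gamma \in \Delta^+ : |\gamma| \leq h\}|$ Killing forms. Since $P_j$ is increasing in $j$ under the hypothesis that all simple roots have a common Killing length, $|\{\gamma \in \Delta^+ : |\gamma| \leq h\}| \leq P_h \cdot |\{\gamma \in Q^+ : |\gamma| \leq h\}| \leq P_h h^d$, yielding the $dP_h h^d$ term.

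For the main-loop contribution, I would compare the graded ascent iteration to the naive one term-by-term. The naive algorithm iterates its outer loop over all $\beta \in Q^+$ with $|\beta| \leq h$ and its inner loop over every $\gamma \prec \beta$, at a cost of $4$ Killing forms per pair. The graded ascent algorithm restricts the outer loop to $\beta \in \Delta_f$, and by the definition of $Q_h$ (together with its monotonicity) this reduces the outer count by at least a factor of $Q_h$. For each fixed $\beta$ it restricts the inner loop to those $\gamma \prec \beta$ that either lie in $\Delta$ directly (the imaginary case of the algorithm) or are positive integer multiples $n\gamma_0$ of a real root $\gamma_0 \in \Delta$ (the real case); by Lemma \ref{real lemma}, all other $\gamma$ have $c(\gamma) = 0$ and may be skipped without changing the Peterson sum. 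Since this restricted collection sits inside the set of lattice points on root lines, which is $P_h$-dense inside $Q^+$, the inner count shrinks by a further factor of at most $P_h$. Multiplying the two reductions gives $P_h Q_h \Knaive(h)$.

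The second displayed inequality then follows at once from monotonicity, since $P_h \leq P_\infty$ and $Q_h \leq Q_\infty$ for every $h$. The most delicate step will be justifying the $P_h$ reduction in the inner loop: the algorithm iterates over pairs $(\gamma_0, n)$ with $\gamma_0$ a real root and $n\gamma_0 \prec \beta$, rather than directly over root-divisors of $\beta$, so one must verify that distinct pairs correspond to distinct subroots $\delta = n\gamma_0$ and then compare the total count of such $\delta$ to $P_h$ times the naive inner count $\prod_j \beta_j - 1$.
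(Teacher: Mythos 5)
Your decomposition into a pingpong contribution bounded by $dP_hh^d$ and a main-loop contribution bounded by $P_hQ_h\Knaive(h)$ is exactly the paper's, and your treatment of the pingpong term (orbits partition $\Delta^+$, then monotonicity of $P_j$) matches the paper's argument. The gap is in the step you yourself flag as delicate, and it is a real one. You propose to charge one Killing form to each pair $(\gamma_0, n)$ with $\gamma_0$ a real root and $n\gamma_0 \prec \beta$, and then to bound the number of such pairs by $P_h$ times the naive inner count. But $P_h$ measures the density of \emph{roots} in $Q^+$, not the density of \emph{lattice points lying on root lines}: for each real root $\gamma_0$ of height $j$ there are on the order of $|\beta|/j$ multiples under $\beta$, only one of which (namely $\gamma_0$ itself) is a root. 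Summing $|\beta|/|\gamma_0|$ over real roots $\gamma_0$ produces a set that can be strictly denser than $P_h$ times the lattice, so the inequality you need in the inner loop does not follow from the definition of $P_h$, and your injectivity check for the pairs $(\gamma_0,n)$ would not rescue it.

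The paper avoids this entirely by a different accounting of the $n$-loop: the quantities $((\gamma,\beta) - n(\gamma,\gamma))c(\gamma)$ require only the two Killing forms $(\gamma,\beta)$ and $(\gamma,\gamma)$, neither of which depends on $n$, so the whole loop over $n$ costs a bounded number of Killing forms per real root $\gamma$. Hence $K_\beta \leq 4\,|\{\gamma \in \Delta : \gamma \prec \beta\}|$, and only the count of genuine roots under $\beta$ enters. From there the paper uses the exact counts $|\{\gamma \in \Delta : |\gamma| = j\}| = P_j\binom{j+d-1}{d-1}$ and $|\{\beta \in \Delta_f : |\beta| \leq h\}| \leq Q_h\binom{h+d-1}{d-1}$, the monotonicity of $P_j$ and $Q_j$, and the ``right grid'' identity $\sum_{j=1}^h \binom{j+d-1}{d-1}\binom{h+d-1}{d-1} \leq \Knaive(h)$ to conclude. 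If you replace your per-pair charging with this per-root charging, the rest of your outline goes through and coincides with the paper's proof.
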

\begin{proof}
    For each $\beta \in \Delta_f$, $K_\beta$ denote the number of Killing forms that the graded ascent algorithm uses to apply the Peterson recurrence formula to compute $c(\beta)$. Then
    $$\Kascent(h) = f(h) + \sum_{j=1}^h \sum_{\substack{\beta \in \Delta_f\\|\beta| = j}} K_\beta,$$
    where $f(h)$ is the number of Killing form computations used by the pingpong algorithm. Moreover, $K_\beta$ is at most $4$ times the set of $\gamma \in \Delta$ such that $\gamma \prec \beta$. The set of $\gamma \in \Delta$ such that $|\gamma| = j$ has cardinality $P_j\binom{j+d-1}{d-1}$ since each such $\gamma$ corresponds to a way of summing $d$ natural numbers up to $j$. For the same reason the set of all such $\beta$ has cardinality $Q_h\binom{h+d-1}{d-1}$, so
\begin{align*}
    \Kascent(h)
        &\leq \sum_{j=1}^h P_j \binom{j+d-1}{d-1} Q_h \binom{h+d-1}{d-1}\\
        &\leq P_h Q_h\sum_{j=1}^h \binom{j+d-1}{d-1}\binom{h+d-1}{d-1}\\
        &= P_hQ_h \Knaive(h) \leq P_\infty Q_\infty \Knaive(h).
\end{align*}
    Here we use the fact that
    $$\sum_{j=1}^h \binom{j+d-1}{d-1}\binom{h+d-1}{d-1} \leq \Knaive(h),$$
    which follows from the ``right grid" computation of $\Knaive(h)$. We also use the fact that $P_j,Q_j$ are increasing sequences.

    Pingponging $\beta$ then requires $d|\mathcal O_\beta|$ computations of Killing forms. Since every $\beta \in \Delta^+$ such that $|\beta| \leq h$ will appear in exactly one Weyl orbit computed this way, the sum $\sum_\beta |\mathcal O_\beta| = P_hh^d$ where $\beta$ ranges over representatives of each Weyl orbit. So a total of $f(h) = dP_hh^d$ Killing forms must be computed to execute the pingpong algorithm.
\end{proof}
    Note that for $h$ very large, both our algorithm and the naive algorithm have super-exponential runtime in $d$. However, the coefficient on the leading-order terms are quite different, which can make the difference between minutes and hours' worth of computation in practice. In addition, the estimate in Proposition \ref{kascent} is general enough to hold for any symmetric Cartan matrix, it is rarely sharp. To illustrate, we compute $\Knaive$ and $\Kascent$ for some small values for the exceptional algebra $E_{10}$.

\begin{table}[h!]
\centering
 \begin{tabular}{c | c | c}
 $h$ & $\Knaive(h)$ & $\Kascent(h)$ \\ [0.5ex]
 \hline
 $10$ & $8.218 \cdot 10^9$ & $950$ \\
 $30$ & $9.488 \cdot 10^{19}$ & $4490$\\
 $60$ & $3.299 \cdot 10^{28}$ & $35451$ \\
 $93$ & $4.407 \cdot 10^{34}$ & $696021$
 \end{tabular}
\end{table}
    But $E_{10}$ has few imaginary roots of height $\leq 100$, and has a high dimension, so this is an extreme example. To illustrate a much less extreme case, we consider the Kac-Moody algebra with Cartan matrix $\begin{bmatrix}2 & -3 \\ -3 & 2\end{bmatrix}$, which is more typical of Kac-Moody algebras in practice. Since this algebra is a symmetric algebra whose Cartan matrix has dimension $2$, we only have to compute the subroots $\gamma = \gamma^1 \alpha_1 + \gamma^2 \alpha_2$ of a $\beta \in \Delta_f$ for which $\gamma^1 < \gamma^2$, and then multiply the total multiplicity by $2$ -- an easy optimization which will save us some computation time.
\begin{table}[h!]
\centering
 \begin{tabular}{c | c | c}
 $h$ & $\Knaive(h)$ & $\Kascent(h)$ \\ [0.5ex]
 \hline
 $10$ & $2660$ & $236$ \\
 $20$ & $34620$ & $1719$\\
 $30$ & $161880$ & $6556$ \\
 $40$ & $490440$ & $18079$ \\
 $50$ & $1116300$ & $40883$\\
 $100$ & $17665100$ & $566541$
 \end{tabular}
\end{table}
    So the algorithm is approximately a constant (which is highly significant for practical purposes!) times faster than the naive algorithm, exactly what Proposition \ref{kascent} predicts.

\section*{Acknowledgements}
The authors gratefully acknowledge the financial backing of the Sherrill Fund and the Summer Undergraduate Research Fellowship. 

\bibliography{algorithm}{}

\begin{thebibliography}{1}

\bibitem{bruns2009polytopes}
W.~Bruns and J.~Gubeladze.
\newblock {\em Polytopes, Rings, and K-Theory}.
\newblock Springer Monographs in Mathematics. Springer New York, 2009.

\bibitem{carbone2014dimensions}
Lisa Carbone, Walter Freyn, and Kyu-Hwan Lee.
\newblock Dimensions of imaginary root spaces of hyperbolic kac-moody algebras.
\newblock {\em Contemp. Math}, 623:23--40, 2014.

\bibitem{polymake2000}
Ewgenij Gawrilow and Michael Joswig.
\newblock {\tt polymake}: a framework for analyzing convex polytopes.
\newblock In {\em Polytopes---combinatorics and computation ({O}berwolfach,
  1997)}, volume~29 of {\em DMV Sem.}, pages 43--73. Birkh\"auser, Basel, 2000.

\bibitem{kac_2014}
Victor~G. Kac.
\newblock {\em Infinite-dimensional Lie algebras: an introduction}.
\newblock Birkhauser, 2014.

\bibitem{kleinschmidt2004e11}
Axel Kleinschmidt.
\newblock E11 as e10 representation at low levels.
\newblock {\em Nuclear physics. B}, 677(3):553--586, 2004.

\bibitem{pasechnik2001computing}
Dmitrii~V Pasechnik.
\newblock On computing hilbert bases via the elliot--macmahon algorithm.
\newblock {\em Theoretical computer science}, 263(1-2):37--46, 2001.

\bibitem{williams2014multiplying}
Virginia~Vassilevska Williams.
\newblock Multiplying matrices in o(n\^2.373) time.
\newblock 2014.

\end{thebibliography}
\bibliographystyle{plain}

\end{document}